\newcommand\copyrighttext{%
	\footnotesize \textcopyright 2025 IEEE. Personal use of this material is permitted.
	Permission from IEEE must be obtained for all other uses, in any current or future 
	media, including reprinting/republishing this material for advertising or promotional 
	purposes, creating new collective works, for resale or redistribution to servers or 
	lists, or reuse of any copyrighted component of this work in other works. 
}
\newcommand\copyrightnotice{%
	\begin{tikzpicture}[remember picture,overlay]
		\node[anchor=south,yshift=10pt] at (current page.south) {\fbox{\parbox{\dimexpr\textwidth-\fboxsep-\fboxrule\relax}{\copyrighttext}}};
	\end{tikzpicture}%
}
\newcommand\tikznode[3][]%
\tikzset{
	-|-/.style={
		to path={
			(\tikztostart) -| ($(\tikztostart)!#1!(\tikztotarget)$) |- (\tikztotarget)
			\tikztonodes
		}
	},
	-|-/.default=0.5,
	|-|/.style={
		to path={
			(\tikztostart) |- ($(\tikztostart)!#1!(\tikztotarget)$) -| (\tikztotarget)
			\tikztonodes
		}
	},
	|-|/.default=0.5,
}
\tikzstyle{block} = [draw=black,fill=white,rectangle,thick,minimum height=2em,minimum width=4em, align=center]
\tikzstyle{sum} = [draw,circle,inner sep=0mm,minimum size=2mm]
\tikzstyle{connector} = [->,thick]
\tikzstyle{dashedconnector} = [->,thick,dashed]
\tikzstyle{line} = [thick]
\tikzstyle{dashedline} = [thick,dashed]
\tikzstyle{branch} = [circle,inner sep=0pt,minimum size=1mm,fill=black,draw=black]
\tikzstyle{guide} = []
\tikzstyle{snakeline} = [connector, decorate, decoration={pre length=0.2cm,
\newcommand{\gettikzxy}[3]{%
	\tikz@scan@one@point\pgfutil@firstofone#1\relax
	\edef#2{\the\pgf@x}%
	\edef#3{\the\pgf@y}%
}
		\pgfmathsetlength\pgfutil@tempdima{\pgfkeysvalueof{/pgf/parallelepiped
				offset x}}
		\pgfmathsetlength\pgfutil@tempdimb{\pgfkeysvalueof{/pgf/parallelepiped
				offset y}}
		\def\ppd@offset{\pgfpoint{\pgfutil@tempdima}{\pgfutil@tempdimb}}
\tikzstyle{server}=[
\tikzstyle{ports}=[
\pgfplotsset{compat=newest}
\pgfplotsset{plot coordinates/math parser=false}
\newlength\figureheight
\newlength\figurewidth 
\newcommand{\ubar}[1]{\text{\b{$#1$}}}
\newcommand{\R}{\mathbb{R}} 
\newcommand{\N}{\mathbb{N}}
\newcommand{\Cc}{\mathcal{C}}
\newcommand{\Xc}{\mathcal{X}}
\newcommand{\norm}[1]{\left\lVert#1\right\rVert}
\newcommand{\ceil}[1]{\left\lceil#1\right\rceil}
\newcommand{\innerproduct}[2]{\left\langle #1, #2 \right\rangle}
\definecolor{istblue}{rgb/cmyk}{0.0,0.25490,0.56862745/1,0.55,0.0,0.43}%
\definecolor{istgreen}{rgb/cmyk}{0.388235,0.83137,0.4431372/0.53,0,0.46,0.16}%
\definecolor{istorange}{rgb/cmyk}{1.0,0.5333,0.0667/0.0,0.46,0.93,0.0}%
\definecolor{istred}{rgb/cmyk}{0.9961,0.2902,0.2863/0.0,0.7,0.71,0.0}%
\definecolor{istlightblue}{rgb/cmyk}{0.3765, 0.6863, 1.0/0.62,0.31,0.0}%
\definecolor{istdarkblue}{rgb/cmyk}{0.1176,0.1804,0.8706/0.86,0.79,0.0,0.12}%
\definecolor{istdarkgreen}{rgb/cmyk}{0.0627,0.5882,0.2824/0.89,0.0,0.52,0.41}%
\definecolor{istdarkred}{rgb/cmyk}{0.529,0.031,0.075/0,0.94,0.86,0.47}%
\definecolor{istlogoblue}{rgb/cmyk/gray}{0,0,0.804/1,1,0,0.2/0}%
\definecolor{unianthrazit}{rgb/cmyk}{0.24314,0.26667,0.29804/0.5,0.35,0.25,0.70}%
\definecolor{unimiddleblue}{rgb/cmyk}{0,0.31765,0.61961/1,0.7,0,0}%
\definecolor{unilightblue}{rgb/cmyk}{0,0.74510,1/0.7,0,0,0}%
\newcommand{\diagdots}[3][-25]{%
	\rotatebox{#1}{\makebox[0pt]{\makebox[#2]{\xleaders\hbox{$\cdot$\hskip#3}\hfill\kern0pt}}}%
}
\newtheorem{assumption}{Assumption}
\newtheorem{theorem}{Theorem}
\newtheorem{proposition}{Proposition}
\newtheorem{lemma}{Lemma}
\newtheorem{remark}{Remark}
\newtheorem{requirement}{Requirement}
\newcommand{\commentout}[1]{}
\title{\LARGE \bf
A polynomial-based QCQP solver for encrypted optimization
}
\author{Sebastian Schlor, Andrea Iannelli, Junsoo Kim, Hyungbo Shim and Frank Allgöwer%
\thanks{F.\ Allgöwer, A.\ Iannelli, and S.\ Schlor are thankful that this work was funded by the Deutsche Forschungsgemeinschaft (DFG, German Research Foundation) under Germany's Excellence Strategy -- EXC 2075 -- 390740016. F.\ Allgöwer further thanks for the grants AL 316/13-2 -- 285825138 and AL 316/15-1 -- 468094890. 
Hyungbo Shim and Junsoo Kim gratefully acknowledge funding by the National Research Foundation of Korea (NRF) grant funded by the Korean Government (MSIT) under Grant RS-2022-00165417 and Grant RS-2024-00353032.
Parts of the paper were prepared when S.\ Schlor was staying at ASRI, Seoul National University, South Korea.
}%
\thanks{ S.\ Schlor, A.\ Iannelli, and F.\ Allgöwer are with the University of Stuttgart, Institute for Systems Theory and Automatic Control, Germany.  {\tt\small \{schlor, iannelli, allgower\}@ist.uni-stuttgart.de}.}%
\thanks{Junsoo Kim is with the Seoul National University of Science and Technology, Department of Electrical and Information Engineering, South Korea. {\tt\small junsookim@seoultech.ac.kr}.}%
\thanks{Hyungbo Shim is with the Seoul National University, ASRI, Department of Electrical and Computer Engineering, South Korea. {\tt\small hshim@snu.ac.kr}.}%
}
\begin{document}

\maketitle
\copyrightnotice
\thispagestyle{empty}
\pagestyle{empty}

\begin{abstract}
In this paper, we present a novel method for solving a class of quadratically constrained quadratic optimization problems using only additions and multiplications. This approach enables solving constrained optimization problems on private data since the operations involved are compatible with the capabilities of homomorphic encryption schemes.
To solve the constrained optimization problem, a sequence of polynomial penalty functions of increasing degree is introduced, which are sufficiently steep at the boundary of the feasible set. Adding the penalty function to the original cost function creates a sequence of unconstrained optimization problems whose minimizer always lies in the admissible set and converges to the minimizer of the constrained problem.
A gradient descent method is used to generate a sequence of iterates associated with these problems.
For the algorithm, it is shown that the iterate converges to a minimizer of the original problem, and the feasible set is positively invariant under the iteration.
Finally, the method is demonstrated on an illustrative cryptographic problem, finding the smaller value of two numbers, and the encrypted implementability is discussed. 

\end{abstract}

\section{Introduction}

Optimization algorithms are key elements of many modern technologies such as artificial intelligence, machine learning or model predictive control (MPC).
At the same time, these applications often deal with sensitive data and handle safety-critical tasks.
To enable privacy-preserving control and optimization, homomorphic cryptosystems have been adopted to perform computations entirely on encrypted data.
However, one of the limitations of this technology is that only polynomial operations, i.e., addition and multiplication are supported.
More complex operations, e.g., divisions and comparisons, have to be approximated, which leads to a higher computational effort and less accuracy. 
This challenge has prevented powerful state-of-the-art optimization algorithms to be applied
under encryption, 
in particular for constrained optimization problems, since these solvers generally need projections or comparisons, 
which cannot be natively handled in homomorphic cryptosystems.

In this paper, we present a novel approach to constrained optimization using only polynomial operations. %
This enables a straightforward implementation in a homomorphically encrypted fashion.
In the proposed algorithm, the constraints are replaced by a sequence of polynomial penalty functions, which are added to the original cost function.
To gradually approach the minimum of the original problem, a sequential gradient descent on the resulting sequence of unconstrained optimization problems is performed.

\subsection{Related work}

For convex unconstrained quadratic optimization problems, encrypted gradient and accelerated gradient methods have been analyzed and implemented in~\cite{Bertolace2023}, and distributed encrypted alternating direction method of multipliers (ADMM) has been proposed in~\cite{Binfet2023}.
The works~\cite{Shoukry2016,Alexandru2017,Alexandru2020, Alexandru2018} consider a linearly constrained quadratic program with private inputs from multiple parties.
The solution is obtained via a projected gradient ascent or projected fast gradient descent algorithm, where the critical projection operation is done by a target node in plaintext or a two-party protocol~\cite{Damgard2008}, involving more communication steps.
To solve quadratically constrained quadratic programs (QCQPs) occurring in encrypted MPC, there are approaches using real-time iterations of the proximal gradient method~\cite{Darup2018c} or ADMM~\cite{Darup2020c, Darup2019c,Darup2019d}.
Thereby, the projections are done by the plant in plaintext.
Alternatives to online optimization for MPC were proposed in~\cite{Darup2018,Darup2019d,Schluter2020,Tjell2021}, where an explicit MPC solution was computed offline and the identification of the active region was performed by the plant, a two-party protocol, or a garbled circuit.
All existing approaches to deal with constraints need a trusted party and decryption or two-party concepts, which are demanding from a computation or communication perspective.

For our approach, we utilize ideas from penalty and barrier methods. For an introduction, see, e.g.,~\cite{Nesterov2004}. Both types replace the constrained optimization problem by a sequence of unconstrained problems, which are easier to solve and approximate the solution of the original problem.
Penalty functions are usually defined as being zero inside the feasible set, and larger than zero outside~\cite{Fiacco1990}.
Barrier functions approach infinity at the boundary of the feasible set~\cite{Forsgren2002}.
The auxiliary problems then weigh the original cost function with a growing/decreasing influence of the penalty/barrier function.
Because penalty functions are often defined piece-wise, and the barrier functions have to grow unbounded on a finite domain, they cannot directly be used with polynomial-based homomorphic encryption.

\subsection{Contribution}
We propose a novel method to solve a class of constrained optimization problems only using polynomial operations (additions and multiplications). 
A detailed analysis of the algorithm is provided, in which convergence to a minimizer and positive invariance with respect to the constraints are shown.
In particular, we make the following contributions:
We introduce a sequence of polynomial penalty functions for convex quadratic constraint sets. When added to the original cost function, we obtain a sequence of unconstrained optimization problems.
For the resulting sequence of unconstrained problems, we prove that
\begin{itemize}
		\item each minimizer always lies inside the feasible set, and
		\item the sequence of minimizers converges to a minimizer of the original problem as the polynomial degree increases.
	\end{itemize}
To solve the original problem, we then propose a sequential gradient descent method generating a sequence of iterates associated with the aforementioned sequence of unconstrained problems.
For this algorithm, we show that
\begin{itemize}
	\item the feasible set is positively invariant, and %
	\item the iterates converge to a minimizer of the original problem.
\end{itemize}
Finally, we discuss its benefits for homomorphically encrypted implementations and give an illustrative example.

\subsection{Notation}
We define the natural numbers as $\N=\{1,2,3\dots\}$.
By $\innerproduct{x}{y}$ we denote the inner product of the vectors $x$ and $y$.
By $\succ 0$ ($\succeq 0$) we denote positive {(semi\nobreakdash-)} definiteness.
For positive semi-definite matrices, we use the Loewner order, i.e., we say that $A \succeq B$ if $A - B\succeq0$.
The boundary of a set $\Cc$ is denoted by $\partial\Cc$.
By $\sigma(A)$ we denote the set of singular values, and by $\bar{\sigma}(A)$ ($\ubar{\sigma}(A)$) the largest (smallest) singular value of $A$.
The smallest integer greater than or equal to a given number is obtained using the ceiling operation, denoted by $\ceil{\cdot}$.

\section{Problem setup and main idea}
In this section, we introduce the original constrained optimization problem and describe the overall approach, that is detailed in the following sections.

\subsection{Problem setup}

We want to solve the constrained optimization problem
\begin{equation}\label{eq:OriginalProblem}
	\begin{split}
	\Xc^\star =  \arg\min_{x} & \quad\hspace*{-0.5em} f(x)\\
	\text{s.t.} & \quad\hspace*{-0.5em} x\in \Cc
	\end{split}
\end{equation}
with the quadratic cost function
\begin{equation*}%
	f(x) = \frac{1}{2} x^\top Q x + q^\top x
\end{equation*}
and the constraint set
\begin{gather*}
	\begin{aligned}
		\Cc &= \{\,x\mid g(x)\leq 1\,\}, & \partial\Cc &= \{\,x\mid g(x)= 1\,\}, \label{eq:Cc}
	\end{aligned}\\
	g(x) = (x-v)^\top A (x-v) \label{eq:g}
\end{gather*}
with $x\in\R^n$, $Q\succeq 0$, $A\succ 0$ and $q,v\in\R^n$.
This is a special case of a convex QCQP.
Further, $\Cc$ should have nonzero volume, i.e., $A$ should be bounded.
Since $f$ is continuous and $\Cc$ is compact, the set of minimizers $\Xc^\star$ is nonempty and compact, but the minimizer might not necessarily be a singleton.

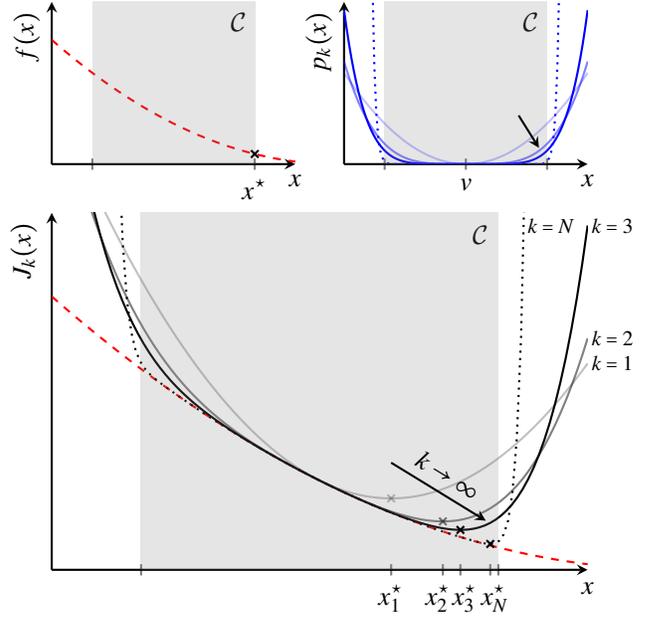
\begin{figure}[tb]
	\centering
	\setlength\figurewidth{0.8\columnwidth}
	\setlength\figureheight{0.5\columnwidth}
	\begin{tikzpicture}
	\setlength\figurewidth{0.75\columnwidth}
	\setlength\figureheight{0.5\columnwidth}
	\begin{axis}[name=plotF,
		anchor=north,
		scale only axis=true,
		height=0.5\figureheight,width=0.5\figurewidth,
		axis lines = left,
		axis line style = thick,
		x label style={at={(axis description cs:1,-0.01)},anchor=north},
		y label style={at={(axis description cs:-0.01,1)},anchor=south east},
		xlabel = \(x\),
		ylabel = {\(f(x)\)},
		ytick={},
		yticklabels={},
		ymajorticks=false,
		every major tick/.append style={thick},
xtick={-1,1},
xticklabels={{},{$x^\star$}},
		ymax=4, ymin=0,
		xmin=-1.5, xmax=1.5,
		every axis plot/.append style={thick}
		]
		\node[] at (axis cs: 0.8,3.5) {$\Cc$};
		\addplot [
		domain=-1.5:1.5, 
		samples=100, 
		color=red,
		dashed,
		]
		{0.25*x^2 - 1*x + 1};
		\addplot[mark=x, opacity=1] coordinates
		{(1,0.25)}; 
		\addplot[area legend, draw=none, fill=gray, fill opacity=0.2, forget plot]
		table[row sep=crcr] {%
			x	y\\
			-1	\pgfkeysvalueof{/pgfplots/ymin}\\
			1	\pgfkeysvalueof{/pgfplots/ymin}\\
			1	\pgfkeysvalueof{/pgfplots/ymax}\\
			-1	\pgfkeysvalueof{/pgfplots/ymax}\\
		}--cycle;
	\end{axis}
	\begin{axis}[name=plotP,
		at={($(plotF.east)+(0.1\figurewidth,0)$)},
		anchor=west,
		scale only axis=true,
		height=0.5\figureheight,width=0.5\figurewidth,
		axis lines = left,
		axis line style = thick,
		x label style={at={(axis description cs:1,-0.01)},anchor=north},
		y label style={at={(axis description cs:-0.01,1)},anchor=south east},
		xlabel = \(x\),
		ylabel = {\(p_k(x)\)},
ymajorticks=false,
every major tick/.append style={thick},
xtick={-1,0,1},
xticklabels={{},{$v$},{}},
ymax=4, ymin=0,
xmin=-1.5, xmax=1.5,
every axis plot/.append style={thick}
		]
		\node[] at (axis cs: 0.8,3.5) {$\Cc$};
		\addplot [
		domain=-1.5:1.5, 
		samples=100, 
		color=blue,
		opacity=0.25
		]
		{1/1*x^2 };
		\addplot [
		domain=-1.5:1.5, 
		samples=100, 
		color=blue,
		opacity=0.5
		]
		{1/2*x^4 };
		\addplot [
		domain=-1.5:1.5, 
		samples=100, 
		color=blue,
		opacity=1
		]
		{1/3*x^6 };
		\addplot [
		domain=-1.2:1.2, 
		samples=100, 
		color=blue,
		opacity=1,
		dotted,
		]
		{1/15*x^30 };
		\addplot[area legend, draw=none, fill=gray, fill opacity=0.2, forget plot]
		table[row sep=crcr] {%
			x	y\\
			-1	\pgfkeysvalueof{/pgfplots/ymin}\\
			1	\pgfkeysvalueof{/pgfplots/ymin}\\
			1	\pgfkeysvalueof{/pgfplots/ymax}\\
			-1	\pgfkeysvalueof{/pgfplots/ymax}\\
		}--cycle;
	\draw[->,>=stealth,thick] (axis cs:0.65,1.2)->(axis cs:0.9,0.4);
	\end{axis}
	\begin{axis}[name=plotJ,
		at={($(plotF.south)!0.5!(plotP.south)-(0,0.15\figureheight)$)},
		anchor=north,
		scale only axis=true,
		height=1.1\figureheight,width=1.1\figurewidth,
		axis lines = left,
		axis line style = thick,
		x label style={at={(axis description cs:1,-0.01)},anchor=north},
		y label style={at={(axis description cs:-0.01,1)},anchor=south east},
		xlabel = \(x\),
		ylabel = {\(J_k(x)\)},
		ymajorticks=false,
		every major tick/.append style={thick},
		xtick={-1,0.4,0.68940,0.787618,0.954776,1},
		xticklabels={{},{$x_1^\star$},{$x_2^\star$~~},{~$x_3^\star$},{~$x_N^\star$},{}},
		ymax=4, ymin=0,
		xmin=-1.5, xmax=1.5,
		every axis plot/.append style={thick}
		]
		\node[] at (axis cs: 0.9,3.75) {$\Cc$};
		\addplot [
		domain=-1.5:1.5, 
		samples=100, 
		color=red,
		dashed,
		]
		{0.25*x^2 - 1*x + 1};
		\addplot [
		domain=-1.5:1.5, 
		samples=100, 
		color=black,
		opacity=0.25
		]
		{1/1*x^2 +0.25*x^2 - 1*x + 1}
		coordinate[pos=1] (A);
		\addplot[mark=x, opacity=0.25] coordinates
		{(0.4,0.8)}; 
		\addplot [
		domain=-1.5:1.5, 
		samples=100, 
		color=black,
		opacity=0.5
		]
		{1/2*x^4 +0.25*x^2 - 1*x + 1}
		coordinate[pos=1] (B);
		\addplot[mark=x, opacity=0.5] coordinates
		{(0.68940,0.54236)}; 
		\addplot [
		domain=-1.5:1.5, 
		samples=100, 
		color=black,
		opacity=1
		]
		{1/3*x^6 +0.25*x^2 - 1*x + 1}
		coordinate[pos=1] (C);
		\addplot[mark=x, opacity=1] coordinates
		{(0.787618,0.447042)}; 
		\addplot [
		domain=-1.2:1.2, 
		samples=100, 
		color=black,
		opacity=1,
		dotted
		]
		{1/15*x^30 +0.25*x^2 - 1*x + 1}
		coordinate[pos=0.6505] (D);
		\addplot[mark=x, opacity=1] coordinates
		{(0.954776,0.289756)}; 
		\addplot[area legend, draw=none, fill=gray, fill opacity=0.2, forget plot]
		table[row sep=crcr] {%
			x	y\\
			-1	\pgfkeysvalueof{/pgfplots/ymin}\\
			1	\pgfkeysvalueof{/pgfplots/ymin}\\
			1	\pgfkeysvalueof{/pgfplots/ymax}\\
			-1	\pgfkeysvalueof{/pgfplots/ymax}\\
		}--cycle;
	\draw[->,>=stealth,thick] (axis cs:0.4,1.2)->(axis cs:0.93,0.55) node[midway,above,sloped] {$k\to\infty$}; 
	\end{axis}
		\node[anchor=west,xshift=-0.2em] at (A) {\footnotesize$k=1$};
		\node[anchor=west,xshift=-0.2em] at (B) {\footnotesize$k=2$};
		\node[anchor=west,xshift=-0.2em] at (C) {\footnotesize$k=3$};
		\node[anchor=west,xshift=-0.2em] at (D|-C) {\footnotesize$k=N$};
\end{tikzpicture} 
	\caption{Example for the sequence of cost functions $\{J_k\}_k$ based on the original cost function $f$ and the sequence of penalty functions $\{p_k\}_k$ for the set $\Cc$ for $k\in\{1,2,3, N\}$ with $N=15$.
	The sequence of minimizers $\{x_k^\star\}_k$ converges to the minimizer $x^\star$ of the constrained problem as $k\to\infty$ (cf.\ Proposition~\ref{lem:auxMin:convegence}).}\label{fig:mainIdea}
\end{figure}

\subsection{Main idea}
Our solution approach is based on the fact that unconstrained convex polynomial optimization problems can be solved by gradient descent algorithms only involving polynomial computations.
Since the gradient of a polynomial function is a polynomial, and gradient descent steps only involve multiplication by a step size and addition to the previous value, the overall approach is polynomial and suitable for encrypted evaluation.

However, since the constrained problem cannot be exactly transformed into a polynomial unconstrained problem, we employ techniques from interior point methods and construct a converging sequence of unconstrained auxiliary problems.
For this purpose, we introduce a sequence of polynomial penalty functions $p_k$ and a sequence of auxiliary cost functions $J_k(x) = f(x) + p_k(x)$.
The construction of the penalty is based on the observation that monomials of increasing power tend towards zero for small values and grow rapidly for large values. By using penalty polynomials that tend to zero inside the allowed set and grow rapidly outside, our approach is a novel intermediate concept between the classical barrier and penalty methods.
A key distinction of our approach is that instead of defining the sequence of problems only by adjusting a weighting parameter, the shape itself of our penalty function changes.
Specifically, the penalty term is chosen such that the auxiliary cost functions have their minimum within the feasible set and this minimizer converges to a minimizer of the original problem as the index $k$ increases.
An exemplary sequence of cost functions is depicted in Figure~\ref{fig:mainIdea}.
Since the penalty function cannot approach infinity at the boundary of the feasible set, and we cannot use Newton-style iterations, we need new concepts and proof techniques.

To solve this sequence of problems and thus find a solution to the original constrained problem, we apply a sequential gradient descent algorithm to the sequence of auxiliary problems. After each gradient step, the index of the cost function increases by one. Intuitively, the gradient steps track the auxiliary minimizer, which converges to the original minimizer, resulting in a convergent algorithm.

We make the construction of the penalty function and the minimization algorithm precise in the following sections.

\section{Sequence of unconstrained problems}\label{sec:Sequence}
In this section, we introduce the sequence of auxiliary unconstrained problems and analyze its properties.

\subsection{Design of the unconstrained problems}

We choose the sequence of penalty function as
\begin{equation*}%
	p_k(x) = m \frac{1}{k} g(x)^{k}
\end{equation*}
for $k\in\N$ and a parameter $m\geq0$ that must satisfy a precise relationship described later.
Its gradient is given by
\begin{equation*}%
	\nabla p_k(x) = m g(x)^{k-1} \nabla g(x).
\end{equation*}
The auxiliary cost function is
\begin{equation*}%
	J_k(x) = f(x) + p_k(x)
\end{equation*}
and we obtain the auxiliary unconstrained problem
\begin{equation}\label{eq:AuxiliaryProblem}
	x_k^\star  =  \arg\min_{x}  \quad\hspace*{-0.5em} J_k(x).
\end{equation}
To make sure that every auxiliary Problem~\eqref{eq:AuxiliaryProblem} has a minimum inside the feasible set, the parameter $m$ has to be chosen such that $-\nabla J_k(x)$ is zero or points towards the interior of $\Cc$ for every point on the boundary $\partial\Cc$.
Hence, we make the following requirement.
	\begin{requirement}[Scaling for minimum inside $\Cc$]\label{req:min}
		The parameter $m$ satisfies $m\geq m_{\mathrm{min}} \coloneq \max(\hat{m}_{\mathrm{min}},0)$ with 
		\begin{equation*}%
			\begin{split}
				\hat{m}_{\mathrm{min}} = \max_{x} & \quad\hspace*{-0.5em} - \frac{\innerproduct{\nabla g(x)}{\nabla f(x)}}{\innerproduct{\nabla g(x)}{\nabla g(x)} }  \\
				\text{s.t.} & \quad\hspace*{-0.5em} g(x)=1. %
			\end{split}
		\end{equation*}
	\end{requirement}
This condition is equivalent to $\nabla J_k(x)$ and $\nabla g(x)$ forming an acute angle at the boundary ($g(x)=1$), that is
\begin{align}\label{eq:gJ}
	&&\innerproduct{\nabla g(x)}{\nabla J_k(x)} &\geq 0 &
	&\Leftrightarrow & - \frac{\innerproduct{\nabla g(x)}{\nabla f(x)}}{\innerproduct{\nabla g(x)}{\nabla g(x)} } &\leq  m.
\end{align}
This is clearly satisfied if $m$ is chosen as in Requirement~\ref{req:min}.
The value $m_{\mathrm{min}}$ can be interpreted as 
the largest ratio between the directional derivatives of $f(x)$ and $g(x)$ in the direction of $\nabla g(x)$
 at the boundary $\partial\Cc$.
While finding the exact $m_{\mathrm{min}}$ can be challenging, we note that any upper bound $m\geq m_{\mathrm{min}}$ is valid.
A negative $\hat{m}_{\mathrm{min}}$ means that the minimizer of the original problem is inside the constraint set. We require $m_{\mathrm{min}}\geq0$, since a negative penalization could render the problem nonconvex.

\subsection{Analysis of the unconstrained problems}

Consider the auxiliary unconstrained Problem~\eqref{eq:AuxiliaryProblem} satisfying Requirement~\ref{req:min}.
First, let us show that the minimizer is well-defined.
\begin{lemma}\label{lem:auxMin:exandunique}
For every $k\in\N$, Problem~\eqref{eq:AuxiliaryProblem} has a unique solution.
\end{lemma}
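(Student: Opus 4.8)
The plan is to prove existence and uniqueness separately, in both cases exploiting that the penalty $p_k$ is convex with a Hessian that is positive definite everywhere except at the single center point $v$. For existence I would invoke the Weierstrass theorem, reducing the task to continuity (immediate, since $J_k$ is a polynomial) and coercivity. As $Q\succeq0$ we have $f(x)\geq q^\top x\geq-\norm{q}\,\norm{x}$, whereas $g(x)\geq\ubar{\sigma}(A)\norm{x-v}^2$ gives $p_k(x)\geq\frac{m}{k}\ubar{\sigma}(A)^k\norm{x-v}^{2k}$, which grows like $\norm{x}^{2k}$ for $m>0$. Hence $J_k(x)\to\infty$ as $\norm{x}\to\infty$, its sublevel sets are compact, and a minimizer exists.

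For the structural properties I would first note that $g$ is convex and nonnegative (because $A\succ0$) and that $t\mapsto t^k$ is convex and nondecreasing on $[0,\infty)$; the composition $g^k$, and thus $p_k$, is therefore convex, so $J_k=f+p_k$ is convex and the minimizer set is convex. I would then sharpen this to a pointwise statement by differentiating $\nabla p_k(x)=m\,g(x)^{k-1}\nabla g(x)$ with $\nabla g(x)=2A(x-v)$ and $\nabla^2 g(x)=2A$, obtaining
\[
\nabla^2 p_k(x)=4m(k-1)g(x)^{k-2}A(x-v)(x-v)^\top A+2m\,g(x)^{k-1}A.
\]
For every $x\neq v$ one has $g(x)>0$, so the second summand is positive definite while the first is positive semidefinite; combined with $Q\succeq0$ this yields $\nabla^2 J_k(x)\succ0$ for all $x\neq v$.

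Uniqueness then follows by contradiction. Suppose $x_1\neq x_2$ were both minimizers; by convexity $J_k$ is constant on the segment $[x_1,x_2]$, so the restriction $\phi(t)=J_k\bigl(x_1+t(x_2-x_1)\bigr)$ obeys $\phi''\equiv0$. But the open segment meets the center $v$ at most once, hence contains some $x_0\neq v$, at which $\phi''=(x_2-x_1)^\top\nabla^2 J_k(x_0)(x_2-x_1)>0$, a contradiction. The main obstacle is exactly this degeneracy of $\nabla^2 p_k$ at $x=v$, where the penalty is flat: a possibly singular $Q$ leaves $f$ only weakly convex, so neither term is strictly convex on its own, and global uniqueness rests on the fact that $\nabla^2 J_k$ fails to be positive definite only at the isolated point $v$---a point any nondegenerate segment passes through at most once. (Should the hypotheses permit $m=0$, the constraint is inactive and uniqueness would instead require $Q\succ0$.)
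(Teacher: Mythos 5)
Your proof is correct and takes essentially the same approach as the paper, whose one-line argument simply asserts that $J_k$ is smooth, radially unbounded, and strictly convex and concludes existence and uniqueness from that. Your coercivity estimate and the segment argument handling the Hessian degeneracy of $p_k$ at the single point $v$ supply the rigorous justification of the strict-convexity claim that the paper leaves implicit, and your closing caveat about $m=0$ flags a genuine edge case (Requirement~\ref{req:min} permits $m=0$, in which case the paper's asserted strict convexity and radial unboundedness can fail for singular $Q$).
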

\begin{proof}
	The objective function $J_k$ is smooth, radially unbounded, and strictly convex. Hence, the minimum is attained at a finite value $x_k^\star$ and unique.
\end{proof}

The intuition of a vanishing penalty inside the feasible set as $k$ tends to infinity can be made precise as follows.
\begin{lemma}\label{lem:auxMin:uniform}
	The auxiliary cost function $J_k$ uniformly converges on the set $\Cc$ to the original cost function $f$ as $k\to\infty$, i.e., $\forall \varepsilon>0\,\exists N\colon\forall k\geq N \,\forall x\in\Cc\colon\, |J_k(x)-f(x)|<\varepsilon$.
\end{lemma}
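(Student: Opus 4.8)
The plan is to notice that the difference $J_k - f$ equals the penalty term exactly, and then to bound that penalty \emph{uniformly} on $\Cc$ using nothing more than the definition of the feasible set. By construction $J_k(x) = f(x) + p_k(x)$, so for every $x$ I have $|J_k(x) - f(x)| = |p_k(x)| = \frac{m}{k}\, g(x)^k$, where I use $m \geq 0$ and the fact that $g(x) = (x-v)^\top A (x-v) \geq 0$ since $A \succ 0$.

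The key step is to exploit that on the feasible set the base of the power is at most one: by definition of $\Cc$, every $x \in \Cc$ satisfies $0 \leq g(x) \leq 1$, hence $g(x)^k \leq 1$ for all $k$ and all such $x$. This yields the estimate $|J_k(x) - f(x)| \leq \frac{m}{k}$ for every $x \in \Cc$, in which the right-hand side no longer depends on $x$. This $x$-independence is the crux of the claim, and it is precisely what the normalizing factor $\frac{1}{k}$ in the definition of $p_k$ buys: without it one would only obtain a bounded-by-$m$ estimate and no convergence.

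To conclude, given $\varepsilon > 0$ I would choose $N = \ceil{m/\varepsilon} + 1$, treating the degenerate case $m = 0$ separately (there $p_k \equiv 0$ and the statement is immediate). Then for all $k \geq N$ and all $x \in \Cc$ one has $\frac{m}{k} \leq \frac{m}{N} < \varepsilon$, which is exactly the uniform-convergence statement $\forall\varepsilon>0\,\exists N\colon\forall k\geq N\,\forall x\in\Cc\colon |J_k(x)-f(x)|<\varepsilon$.

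I do not expect a genuine obstacle here: the whole argument reduces to the elementary inequality $t^k \leq 1$ for $t \in [0,1]$ together with the decay of $\frac{1}{k}$. The only point deserving a moment of care is verifying that $N$ is chosen independently of $x$, which is what distinguishes uniform from pointwise convergence; but this is automatic, since the bounding quantity $\frac{m}{k}$ is already free of $x$. Compactness of $\Cc$ is not even needed beyond ensuring $g \leq 1$ holds on all of $\Cc$.
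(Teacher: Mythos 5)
Your proposal is correct and follows essentially the same argument as the paper: both identify $|J_k(x)-f(x)|=p_k(x)=\frac{m}{k}g(x)^k$, bound it by the $x$-independent quantity $\frac{m}{k}$ using $g(x)\leq 1$ on $\Cc$, and pick $N$ proportional to $m/\varepsilon$ (the paper takes $N=2\ceil{\frac{m}{\varepsilon}}$, you take $\ceil{m/\varepsilon}+1$, an immaterial difference). Your explicit handling of the degenerate case $m=0$ is a small point of extra care not spelled out in the paper, but the substance is identical.
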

\begin{proof}
	Pick $\epsilon>0$ and $N = 2\ceil{\frac{m}{\varepsilon}}$. Then, $|J_k(x)-f(x)|= p_k(x)= m\frac{1}{k}g(x)^k \leq m\frac{1}{k}\leq m\frac{1}{N} \leq \frac{1}{2}\varepsilon<\varepsilon$, where we used that $g(x)\leq1$ for $x\in\Cc$.
\end{proof}

With this, we arrive at our first important result concerning convergence towards the set of minimizers $\Xc^\star$.
\begin{proposition}\label{lem:auxMin:convegence}
		Let Requirement~\ref{req:min} hold.
	Then, the auxiliary minimizer $x_k^\star$ of Problem~\eqref{eq:AuxiliaryProblem}
	\begin{enumerate}
		\item is contained in the set $\Cc$ for every $k\in\N$, i.e., $x_k^\star\in \Cc \forall k\in\N$, \label{prop:contained} \vspace*{-\baselineskip}
		\item converges to $x^\star = \arg\min_{x\in\Xc^\star} g(x) \in\Xc^\star$
		as $k\to \infty$, i.e., $x_k^\star \to x^\star$ as $k\to\infty$. \label{prop:converges}
	\end{enumerate}
\end{proposition}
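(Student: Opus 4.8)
The plan is to prove the two claims separately, taking Lemma~\ref{lem:auxMin:exandunique} (existence, uniqueness, and strict convexity of $J_k$) and Lemma~\ref{lem:auxMin:uniform} (uniform convergence $J_k\to f$ on $\Cc$) as given.

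For claim~\ref{prop:contained}, I would argue by contradiction from the boundary condition in Requirement~\ref{req:min}. Introduce the \emph{constrained} minimizer $\hat x = \arg\min_{x\in\Cc} J_k(x)$, which exists and is unique since $J_k$ is continuous and strictly convex and $\Cc$ is compact and convex. Suppose the unconstrained minimizer satisfies $x_k^\star\notin\Cc$. Then $\hat x$ cannot lie in the interior of $\Cc$, for an interior minimizer would be a stationary point of $J_k$ and hence equal the unique unconstrained minimizer $x_k^\star$, contradicting $x_k^\star\notin\Cc$. Thus $\hat x\in\partial\Cc$, i.e.\ $g(\hat x)=1$. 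Since $v$ is a Slater point ($g(v)=0<1$), the KKT conditions for minimizing $J_k$ over $\{g\le1\}$ yield a multiplier $\lambda\ge0$ with $\nabla J_k(\hat x) = -\lambda\,\nabla g(\hat x)$, whence $\innerproduct{\nabla g(\hat x)}{\nabla J_k(\hat x)} = -\lambda\norm{\nabla g(\hat x)}^2\le 0$. On the other hand, \eqref{eq:gJ}, which holds at every boundary point under Requirement~\ref{req:min}, gives $\innerproduct{\nabla g(\hat x)}{\nabla J_k(\hat x)}\ge0$. As $\nabla g(\hat x)=2A(\hat x-v)\neq0$ on $\partial\Cc$, these force $\lambda=0$, so $\nabla J_k(\hat x)=0$ and $\hat x=x_k^\star\in\Cc$, a contradiction. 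Hence $x_k^\star\in\Cc$.

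For claim~\ref{prop:converges}, I would first record that $x^\star$ is well-defined: $\Xc^\star$ is nonempty, compact, and convex (minimizers of the convex $f$ over the convex $\Cc$), and $g$ is strictly convex, so the minimizer of $g$ over $\Xc^\star$ is unique. Writing $f^\star = \min_{x\in\Cc} f(x)$ and using $p_k\ge0$, optimality of $x_k^\star$, $f(x^\star)=f^\star$, and $g(x^\star)\le1$, I obtain the chain
\[
f^\star \le f(x_k^\star) \le J_k(x_k^\star) \le J_k(x^\star) = f^\star + \tfrac{m}{k}g(x^\star)^k \le f^\star + \tfrac{m}{k},
\]
where the first inequality uses claim~\ref{prop:contained}. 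This already yields $f(x_k^\star)\to f^\star$.

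The crux is to identify \emph{which} minimizer of $f$ the sequence selects. Comparing only the penalty contributions in the same chain, from $f^\star + \tfrac{m}{k}g(x_k^\star)^k \le J_k(x_k^\star)\le J_k(x^\star) = f^\star + \tfrac{m}{k}g(x^\star)^k$ and taking $k$-th roots on $[0,1]$ (for $m>0$) gives the monotone comparison $g(x_k^\star)\le g(x^\star)$ for every $k$. Now I extract limit points: since $\{x_k^\star\}\subset\Cc$ is bounded, any convergent subsequence $x_{k_j}^\star\to\bar x$ has $\bar x\in\Cc$ with $f(\bar x)=f^\star$ (so $\bar x\in\Xc^\star$) and $g(\bar x)\le g(x^\star)$; but $\bar x\in\Xc^\star$ forces $g(\bar x)\ge g(x^\star)$, hence $g(\bar x)=g(x^\star)$, and uniqueness of the $g$-minimizer over $\Xc^\star$ gives $\bar x=x^\star$. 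As every convergent subsequence of the bounded sequence has the same limit $x^\star$, the whole sequence converges to $x^\star$. The degenerate case $m=0$ makes $J_k\equiv f$, for which the statement is immediate. I expect this last identification to be the main obstacle: uniform convergence alone only pins the limit to the possibly non-singleton set $\Xc^\star$, and the selection mechanism—the comparison $g(x_k^\star)\le g(x^\star)$ forced by the penalty term—is the key idea that must be found; a secondary care point is the boundary placement of $\hat x$ and the non-degeneracy $\nabla g\neq0$ on $\partial\Cc$ used in claim~\ref{prop:contained}.
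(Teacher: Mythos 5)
Your proof is correct and follows essentially the same route as the paper: part~\ref{prop:contained}) rests on the boundary condition~\eqref{eq:gJ} from Requirement~\ref{req:min} (the paper phrases this loosely via directional derivatives at $\partial\Cc$, while your KKT argument at the constrained minimizer is the more airtight rendering of the same idea), and part~\ref{prop:converges}) hinges on exactly the paper's key comparison $g(x_k^\star)\leq g(x^\star)$ from the penalty terms together with the vanishing of $p_k(x^\star)$, with your subsequence extraction replacing the paper's informal set-convergence argument $\Theta\cap\Omega_k\to\Theta\cap\Xc^\star=\{x^\star\}$. Your separate treatment of the degenerate case $m=0$ (where dividing out $m$ to take $k$-th roots fails) is a detail the paper glosses over; there are no gaps.
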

\begin{proof}
	Assume $x_k^\star\nin\Cc$. Then, there exists a point $x\in\partial\Cc$, and a vector $d\neq 0$ pointing out of $\Cc$, such that the directional derivative of $J_k$ at that point $x$ along the vector $d$ is negative.
	However, by Requirement~\ref{req:min}, the directional derivative of $J_k$ along any vector pointing out of $\Cc$ is nonnegative. By contradiction, $x_k^\star\in\Cc$.
	This proves Part~\ref{prop:contained}).

	The minimizer $x^\star= \arg\min_{x\in\Xc^\star} g(x)$ exists and is attained at a unique point since $g$ is strongly convex and $\Xc^\star$ is convex and compact.
	Let us define the sets $\Theta = \{\,x\mid g(x)\leq g(x^\star)\,\}\subseteq\Cc$ and $\Omega_k = \{\,x\mid f(x)\leq f(x^\star) + p_k(x^\star)\,\}$.
	Then, $x_k^\star\in \Theta\cap\Omega_k$.
	This is true since $x_k^\star$ is the minimizer of $J_k$, in particular,
	\begin{equation*}
		f(x_k^\star) + p_k(x_k^\star) \leq f(x^\star) + p_k(x^\star).
	\end{equation*}
	Since $f(x^\star)\leq f(x_k^\star)$, we have $p_k(x_k^\star) \leq p_k(x^\star)$, and hence $g(x_k^\star) \leq g(x^\star)$.
	Thus, $x_k^\star\in\Theta\subseteq\Cc$.
	Further, since $p_k(x_k^\star)\geq 0$, also $f(x_k^\star) \leq f(x^\star) + p_k(x^\star)$, and hence, $x_k^\star\in\Omega_k$.
	
	Since due to Lemma~\ref{lem:auxMin:uniform} $p_k(x^\star)$ uniformly converges to zero, the set $\Omega_k$ converges to the set $\Xc^\star$ as $k\to\infty$.
	Hence, $\Theta\cap\Omega_k \to \Theta\cap\Xc^\star$ as $k\to\infty$.
	Since $x^\star = \arg\min_{x\in\Xc^\star} g(x)$ is the only element of $\Theta\cap\Xc^\star$, $x_k^\star\to x^\star$.
	This proves Part~\ref{prop:converges}).
\end{proof}

The following property becomes important for the gradient method in the next section.
\begin{lemma}\label{lem:L-smooth}
	For every $k\in\N$, the auxiliary cost function $J_k$ is $L_k$-smooth inside $\Cc$, i.e., $\forall k\in\N\,\exists L_k\geq 0:\nabla^2 J_k(x) \preceq L_k I \,\forall x\in\Cc$,
	and a smoothness constant is given as $L_k= \bar{\sigma}(Q+m(4k-2)A)$.
\end{lemma}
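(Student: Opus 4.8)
The plan is to compute the Hessian of $J_k$ in closed form, dominate it in the Loewner order by a matrix that does not depend on $x$, and then replace that matrix by its largest singular value times the identity. Since $\nabla^2 f = Q$, $\nabla g(x) = 2A(x-v)$ and $\nabla^2 g(x) = 2A$, differentiating $\nabla p_k(x) = m\, g(x)^{k-1}\nabla g(x)$ with the product rule gives
\begin{equation*}
	\nabla^2 J_k(x) = Q + m\bigl[\,4(k-1)g(x)^{k-2}A(x-v)(x-v)^\top A + 2 g(x)^{k-1} A\,\bigr].
\end{equation*}
Note that for $k=1$ the first summand vanishes because of the factor $(k-1)$, so the ostensibly singular power $g(x)^{-1}$ never actually contributes; the identity holds for all $k\in\N$.

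The key step is a rank-one Loewner estimate, namely $A(x-v)(x-v)^\top A \preceq g(x)\,A$. I would prove it by the substitution $u = A^{1/2}(x-v)$, so that for an arbitrary vector $y$ with $w = A^{1/2}y$ the claimed inequality $y^\top A(x-v)(x-v)^\top A\, y \le g(x)\, y^\top A y$ reads $(w^\top u)^2 \le \norm{u}^2\norm{w}^2$, which is exactly the Cauchy--Schwarz inequality; here $\norm{u}^2 = (x-v)^\top A(x-v) = g(x)$. Substituting this bound into the Hessian, the two penalty terms combine into a single one,
\begin{equation*}
	\nabla^2 J_k(x) \preceq Q + m\bigl(4(k-1)+2\bigr)g(x)^{k-1} A = Q + m(4k-2)\,g(x)^{k-1} A.
\end{equation*}

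Finally, I would use that on $\Cc$ we have $0 \le g(x) \le 1$ (nonnegativity since $A\succ 0$), hence $g(x)^{k-1}\le 1$, and since $m\ge 0$, $A\succ 0$, the truncation $m(4k-2)g(x)^{k-1}A \preceq m(4k-2)A$ is valid. This yields $\nabla^2 J_k(x) \preceq Q + m(4k-2)A$, a matrix independent of $x$. Because $Q\succeq 0$, $A\succ 0$ and $m\ge 0$, the matrix $Q+m(4k-2)A$ is symmetric positive semidefinite, so its largest eigenvalue equals its largest singular value, and every symmetric matrix is dominated in the Loewner order by its largest eigenvalue times $I$. Therefore $\nabla^2 J_k(x) \preceq \bar{\sigma}\bigl(Q+m(4k-2)A\bigr) I$, giving the stated smoothness constant $L_k = \bar{\sigma}(Q+m(4k-2)A)$.

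The only genuine obstacle is the rank-one inequality; the rest is bookkeeping and the two elementary facts that $g\ge 0$ on $\R^n$ and $g\le 1$ on $\Cc$. I would also flag explicitly the $k=1$ corner case to reassure the reader that no negative power of $g(x)$ survives.
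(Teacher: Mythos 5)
Your proof is correct and takes essentially the same route as the paper's: compute the Hessian, bound the rank-one term $A(x-v)(x-v)^\top A$ by $A$ on $\Cc$ (the paper does this by parameterizing the boundary as $(x-v)=\sqrt{A^{-1}}y$ with $\|y\|=1$ and using $\bar{\sigma}(yy^\top)=1$, which is your Cauchy--Schwarz estimate in disguise), combine $(4k-4)+2=4k-2$, and conclude with $Q+m(4k-2)A \preceq \bar{\sigma}(Q+m(4k-2)A)I$. Your pointwise bound $A(x-v)(x-v)^\top A \preceq g(x)A$ and the explicit $k=1$ remark are marginally tidier than the paper's ``largest Hessian at the boundary'' shortcut, but the substance is identical.
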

\begin{proof}
	The proof is given in Appendix~\ref{proof:lem:L-smooth}.
\end{proof}

\section{Sequential gradient descent}
To find a solution to the problem presented in Section~\ref{sec:Sequence}, and thus to the original problem, we employ gradient descent since it only involves multiplication of the step size and the gradient, and addition to the previous iterate.

Consider a sequential gradient descent algorithm
\begin{align}\label{eq:GradDesc}
	x_{k+1}  =  x_k - \gamma_k \nabla J_k(x_k),
\end{align}
where we sequentially update the cost function after every gradient step.
We choose $0<\gamma_k\leq \frac{1}{L_k}$ as step size, and require that the sequence $\{\gamma_k^2\}_k$ is summable, whereas $\{\gamma_k\}_k$ is not. This is fulfilled, e.g., for $\gamma_k = \frac{1}{L_k}$.
Further, we make the following standard assumption for interior point methods.
\begin{assumption}
	The initial point is feasible, i.e., $x_1\in\Cc$.
\end{assumption}
The center of the constraint ellipsoid $x=v$ is always a feasible starting point; however, better warm starts might be possible.

For the following results, we impose a further requirement on the scaling parameter $m$.
\begin{requirement}[Scaling for invariance of $\Cc$]\label{req:inv}
	The parameter $m$ satisfies $m\geq m_{\mathrm{inv}} \coloneq \max(\hat{m}_{\mathrm{inv}},0) \geq m_{\mathrm{min}}$ with 
	\begin{equation*}
		\begin{split}
			\hat{m}_{\mathrm{inv}} = \min & \quad\hspace*{-0.5em} m\\
			\text{s.t.} 
& \quad\hspace*{-0.5em} \|\nabla f(x) + m \nabla g(x)\| \leq 2rL_1\cos(\phi(x))
~ \forall x\in \partial\Cc,
		\end{split}
	\end{equation*}
$r=\frac{\sqrt{\ubar{\sigma}(A)}}{\bar{\sigma}(A)}$, and
	$\cos(\phi(x)) = \frac{\left\langle \nabla f(x) + m \nabla g(x),\nabla g(x) \right\rangle}{\norm{\nabla f(x) + m \nabla g(x)} \norm{\nabla g(x)}}$
describing the angle between $\nabla f(x) + m \nabla g(x)$ and $\nabla g(x)$.
\end{requirement}
As for Requirement~\ref{req:min}, finding the exact $m_{\mathrm{inv}}$ can be challenging, however, any upper bound $m\geq m_{\mathrm{inv}}$ is valid.
\begin{remark}
	In the scalar case $n=1$, Requirement~\ref{req:min} and Requirement~\ref{req:inv} are equivalent.
\end{remark}
\commentout{\begin{proof}
	Without loss of generality, assume $v=0$, $\partial\Cc=\{-r,r\}$. 
	From Lemma~\ref{lem:L-smooth} it follows that $|\nabla f(r) + m \nabla g(r) - (\nabla f(-r) + m \nabla g(-r))| \leq 2rL_k \leq 2rL_1$.
	With $\nabla g(-r)<0$, $\nabla g(r)>0$, Requirement~\ref{req:min} yields $\nabla f(-r) + m \nabla g(-r) \leq 0 \land \nabla f(r) + m \nabla g(r)\geq 0$.
	Thus, $|\nabla f(r) + m \nabla g(r)| \leq 2rL_1 \land |\nabla f(-r) + m \nabla g(-r)| \leq 2rL_1$.
	This is equivalent to Requirement~\ref{req:inv} for $\cos(\phi(-r)) = \cos(\phi(r)) = 1$, which follows from Requirement~\ref{req:min}. 
	Similarly, from Requirement~\ref{req:inv}, $\cos(\phi(-r)) = \cos(\phi(r)) = 1$ follows. Therefore, $\nabla g(-r)(\nabla f(-r) + m \nabla g(-r)) \geq 0 \land \nabla g(r)(\nabla f(r) + m \nabla g(r))\geq 0$, which is the condition for Requirement~\ref{req:min}.
\end{proof}}%

Then, we can show the following important property.
\begin{proposition}%
	\label{thm:Inv}
	Let Requirement~\ref{req:inv} hold.
	Then, the set $\Cc$ is positively invariant under the gradient descent step~\eqref{eq:GradDesc} with the step size $0<\gamma_k\leq \frac{1}{L_k}$, i.e., if $x_k\in\Cc$, then also $x_{k+1}\in\Cc$.
\end{proposition}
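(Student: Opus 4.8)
The plan is to prove the equivalent statement $g(x_{k+1})\leq 1$ whenever $g(x_k)\leq 1$. Since $g$ is a quadratic form, I can evaluate it exactly along the gradient step: writing $d=\nabla J_k(x_k)$ and using $\nabla g(x_k)=2A(x_k-v)$ together with the constant Hessian $\nabla^2 g\equiv 2A$, one gets
\begin{equation*}
	g(x_{k+1}) = g(x_k) - \gamma_k \innerproduct{d}{\nabla g(x_k)} + \gamma_k^2\, d^\top A d .
\end{equation*}
Rather than analysing this for the single point $x_k$, I would prove the stronger claim $\max_{x\in\Cc} g\!\left(x-\gamma_k\nabla J_k(x)\right)\leq 1$. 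Because $\Cc$ is compact and $h(x)\coloneq g(x-\gamma_k\nabla J_k(x))$ is smooth, the maximum is attained, and it suffices to bound $h$ separately at interior critical points and on the boundary $\partial\Cc$.

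At an interior maximiser $x^\circ$ one has $\nabla h(x^\circ)=\bigl(I-\gamma_k\nabla^2 J_k(x^\circ)\bigr)\nabla g\!\left(x^\circ-\gamma_k\nabla J_k(x^\circ)\right)=0$. By Lemma~\ref{lem:L-smooth} and $\gamma_k\leq 1/L_k$ the matrix $I-\gamma_k\nabla^2 J_k(x^\circ)\succeq(1-\gamma_k L_k)I$ is positive semidefinite, and strictly positive definite when $\gamma_k<1/L_k$; in that case it is invertible, so $\nabla g\!\left(x^\circ-\gamma_k\nabla J_k(x^\circ)\right)=0$, i.e.\ $x^\circ-\gamma_k\nabla J_k(x^\circ)=v$ and $h(x^\circ)=g(v)=0\leq 1$. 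Hence for $\gamma_k<1/L_k$ any interior maximiser is harmless, and the endpoint $\gamma_k=1/L_k$ is recovered by continuity of $\gamma\mapsto\max_{x\in\Cc}g(x-\gamma\nabla J_k(x))$.

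The substantive step is the boundary $\partial\Cc$, where $g(x)=1$ and therefore $\nabla p_k(x)=m\nabla g(x)$, so $d=\nabla J_k(x)=\nabla f(x)+m\nabla g(x)$ is independent of $k$ — exactly why Requirement~\ref{req:inv} is phrased through $\nabla f+m\nabla g$. Here $g(x_{k+1})\leq 1$ reduces to $\gamma_k\, d^\top A d \leq \innerproduct{d}{\nabla g(x)}$, and since $\innerproduct{d}{\nabla g(x)}\geq 0$ by Requirement~\ref{req:min} while the left side is nondecreasing in $\gamma_k$, it is enough to verify the inequality at $\gamma_k=1/L_k$. I would then bound $d^\top A d\leq \bar{\sigma}(A)\norm{d}^2$, use $\norm{\nabla g(x)}=2\norm{A(x-v)}\geq 2\sqrt{\ubar{\sigma}(A)}$ (from $(x-v)^\top A^2(x-v)\geq \ubar{\sigma}(A)(x-v)^\top A(x-v)=\ubar{\sigma}(A)$ on $\partial\Cc$), and write $\innerproduct{d}{\nabla g(x)}=\norm{d}\,\norm{\nabla g(x)}\cos\phi(x)$. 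These collapse the requirement to $\norm{d}\leq 2rL_k\cos\phi(x)$ with $r=\sqrt{\ubar{\sigma}(A)}/\bar{\sigma}(A)$, which follows from Requirement~\ref{req:inv} together with $L_k\geq L_1$ (since $Q+m(4k-2)A\succeq Q+2mA$ for $k\geq 1$).

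The main obstacle is not any individual estimate but orchestrating them so that Requirement~\ref{req:inv} emerges exactly: the factor $\bar{\sigma}(A)$, the lower bound on $\norm{\nabla g}$, and $L_k\geq L_1$ must combine precisely into the constant $2rL_1$ and the cosine term, and the degenerate boundary case $\cos\phi(x)=0$ (which forces $d=0$ and a null step) must be checked. The one genuinely delicate point is the interior argument's reliance on invertibility of $I-\gamma_k\nabla^2 J_k$, which fails only at the non-strict step size $\gamma_k=1/L_k$ and is resolved by the $\gamma_k<1/L_k$ reduction together with the continuity argument above.
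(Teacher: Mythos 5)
Your proof is correct, and it takes a genuinely different route from the paper's. The paper argues geometrically in three steps: it first observes that on each level set $\{\,x\mid g(x)=c\,\}$ the gradient map $x\mapsto x-\gamma_k\nabla J_k(x)$ is affine (since there $\nabla p_k(x)=2mc^{k-1}A(x-v)$), so every ellipsoidal level surface maps to an ellipsoidal surface; it then shows via a Loewner-order computation ($\bar{\sigma}(T_1^{-1}T_c\sqrt{c})\leq 1$) that the images of the inner level sets are nested inside the image structure of the unit level; and it settles the boundary case by a rolling-ball argument, inscribing at $x_k\in\partial\Cc$ a ball of radius $r=\sqrt{\ubar{\sigma}(A)}/\bar{\sigma}(A)$ (the minimal curvature radius of $\partial\Cc$) whose chord in the step direction has length $2r\cos(\phi(x_k))$, against which Requirement~\ref{req:inv} bounds the step. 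You instead maximize $h(x)=g(x-\gamma_k\nabla J_k(x))$ over the compact set $\Cc$ and split by where the maximum sits: interior critical points are dispatched because $I-\gamma_k\nabla^2 J_k(x)\succeq(1-\gamma_k L_k)I$ (Lemma~\ref{lem:L-smooth}) is invertible for $\gamma_k<1/L_k$, forcing the image point to equal the center $v$ and hence $h=0$, with the endpoint step size recovered by continuity of the value function; on $\partial\Cc$ you exploit that $g$ is exactly quadratic, so invariance reduces to $\gamma_k\,d^\top A d\leq\innerproduct{d}{\nabla g(x)}$ with $d=\nabla f(x)+m\nabla g(x)$ independent of $k$, and your bounds $d^\top Ad\leq\bar{\sigma}(A)\norm{d}^2$, $\norm{\nabla g(x)}\geq 2\sqrt{\ubar{\sigma}(A)}$, and $1/\gamma_k\geq L_k\geq L_1$ collapse this to exactly the condition of Requirement~\ref{req:inv}, with the constant $r$ emerging algebraically rather than from curvature. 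Your bookkeeping on the delicate points is sound: you correctly note that using the lower bound on $\norm{\nabla g}$ on the right-hand side presupposes $\cos(\phi(x))\geq 0$, which Requirement~\ref{req:min} (implied by $m_{\mathrm{inv}}\geq m_{\mathrm{min}}$ in Requirement~\ref{req:inv}) guarantees, and that $\cos(\phi(x))=0$ forces $d=0$ and a null step. Comparing the two: your argument is shorter and self-contained, avoids the two ellipsoid-mapping steps entirely, and makes transparent why Requirement~\ref{req:inv} has precisely the form it does; the paper's proof, at the cost of the affine-image machinery, yields a sharper structural picture (it tracks where every level set $\partial\Cc_c$ lands, not just that $\Cc$ is invariant) and sidesteps your invertibility-plus-continuity workaround at $\gamma_k=1/L_k$, since interior points are handled by the level-set mapping rather than by first-order conditions.
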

\begin{proof}
	The proof is given in Appendix~\ref{proof:thm:Inv}.
\end{proof}
This invariance property allows to stop 
the sequential gradient descent
after any finite number of iterations $N$ without violation of the constraints.
Now, we can show our main result, convergence of 
the sequential gradient descent
to a minimizer of the original constrained problem. 
\begin{theorem}\label{thm:conv1}
	Let Requirement~\ref{req:inv} hold.
	Let $\{x_{k}\}_k$ be the sequence of iterates resulting from sequential gradient descent~\eqref{eq:GradDesc} and $x^\star\in\Xc^\star$ a minimizer of the original Problem~\eqref{eq:OriginalProblem}. Then,
	\begin{equation*}
		f(x_k)  \to f(x^\star) \text{ as } k\to\infty.
	\end{equation*}
\end{theorem}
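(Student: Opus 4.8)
The plan is to track the squared distance $D_k := \norm{x_k - x^\star}^2$ to a fixed limiting minimizer and to reduce the claim to showing that the optimality gap $\delta_k := f(x_k) - f(x^\star)$ tends to zero. First I would record three consequences of feasibility. By Proposition~\ref{thm:Inv} together with $x_1\in\Cc$, every iterate stays in $\Cc$; hence $\delta_k \ge 0$ (as $x^\star$ minimizes $f$ over $\Cc$) and $0 \le p_k(x_k) = \tfrac{m}{k}g(x_k)^k \le m/k$. Moreover, on the compact set $\Cc$ the gradient is uniformly bounded: since $g(x_k)^{k-1}\le 1$ and $\nabla g,\nabla f$ are bounded on $\Cc$, there is a constant $G$ with $\norm{\nabla J_k(x_k)}\le G$ for all $k$, and $f$ is Lipschitz on $\Cc$ with some constant $L_f$.

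Next I would derive the basic recursion. Expanding the square of the update~\eqref{eq:GradDesc} gives
\begin{equation*}
D_{k+1} = D_k - 2\gamma_k \innerproduct{\nabla J_k(x_k)}{x_k - x^\star} + \gamma_k^2 \norm{\nabla J_k(x_k)}^2.
\end{equation*}
Convexity of $J_k$ yields $\innerproduct{\nabla J_k(x_k)}{x_k - x^\star} \ge J_k(x_k) - J_k(x^\star) = \delta_k + p_k(x_k) - p_k(x^\star) \ge \delta_k - m/k$, using $p_k(x_k)\ge 0$ and $p_k(x^\star)\le m/k$. Combined with the uniform gradient bound this gives
\begin{equation*}
D_{k+1} \le D_k - 2\gamma_k \delta_k + \varepsilon_k, \qquad \varepsilon_k := \tfrac{2m}{k}\gamma_k + G^2\gamma_k^2 .
\end{equation*}
The perturbation is summable: $\sum_k \gamma_k^2 < \infty$ by the step-size rule, and $\sum_k \gamma_k/k \le (\sum_k \gamma_k^2)^{1/2}(\sum_k k^{-2})^{1/2} < \infty$ by Cauchy--Schwarz, so $\sum_k \varepsilon_k < \infty$. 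Summing the recursion and using $D_{K+1}\ge 0$ telescopes to $2\sum_{k=1}^{K} \gamma_k \delta_k \le D_1 + \sum_k \varepsilon_k < \infty$, hence $\sum_k \gamma_k \delta_k < \infty$. Since $\delta_k\ge 0$ and $\sum_k \gamma_k = \infty$, this already forces $\liminf_k \delta_k = 0$.

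The main obstacle, and the final step, is upgrading this $\liminf$ to a genuine limit, since a priori $\delta_k$ might oscillate. Here I would exploit that the iterates move slowly: $\norm{x_{k+1}-x_k} = \gamma_k\norm{\nabla J_k(x_k)} \le G\gamma_k$, so $|\delta_{k+1}-\delta_k| \le L_f G\, \gamma_k$. If $\limsup_k\delta_k = 2a > 0$, then because $\liminf_k\delta_k=0$ the sequence must perform infinitely many excursions rising from below $a/2$ to above $a$; on the sub-stretch of each excursion where $\delta_k\ge a/2$, the slow-variation bound forces the accumulated step length to satisfy $\sum \gamma_k \ge a/(2L_f G)$, so each excursion contributes at least $\tfrac{a}{2}\cdot\tfrac{a}{2L_f G}$ to $\sum_k \gamma_k\delta_k$, contradicting its finiteness. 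Therefore $\delta_k\to 0$, i.e.\ $f(x_k)\to f(x^\star)$. Note that the smoothness Lemma~\ref{lem:L-smooth} and the choice $\gamma_k\le 1/L_k$ enter the argument only through Proposition~\ref{thm:Inv}, to guarantee that all iterates remain feasible.
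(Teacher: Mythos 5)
Your proposal is correct, and although it opens with exactly the same recursion on $\norm{x_k-x^\star}^2$ that the paper uses (both follow the standard subgradient-method template, and your bound $p_k(x^\star)\le m/k$ plays the role of the paper's summable term $2m\sum_i\gamma_i\tfrac{1}{i}g(x^\star)^i$), it departs from the paper at the two places where real work is needed. First, you control the gradient term by a uniform bound $\norm{\nabla J_k(x_k)}\le G$ on $\Cc$, valid since $g(x)^{k-1}\le 1$ there and the iterates stay feasible by Proposition~\ref{thm:Inv}; the paper instead proves summability of $\gamma_k^2\norm{\nabla J_k(x_k)}^2$ through a descent-lemma telescoping argument (its Lemma~\ref{lem:summable}), which avoids compactness-based constants but needs the observation that $L_k\to\infty$. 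Your route is simpler, at the price of leaning once more on invariance. Second, and more substantively, the paper never confronts the liminf-versus-limit issue at all: it shows that $\{J_k(x_k)\}_k$ is nonincreasing (Lemma~\ref{lem:nonincreasing}, exploiting that the penalty $\tfrac{m}{k}g^k$ decreases in $k$ on $\Cc$), so the telescoped bound~\eqref{eq:convProof} directly yields convergence of the whole sequence $J_k(x_k)\to f(x^\star)$, together with an explicit error bound in terms of $\sum_i\gamma_i$. You instead establish $\sum_k\gamma_k\delta_k<\infty$ for the gap $\delta_k=f(x_k)-f(x^\star)$, deduce $\liminf_k\delta_k=0$, and upgrade this to a genuine limit via the slow-variation/excursion argument based on $|\delta_{k+1}-\delta_k|\le L_fG\gamma_k$. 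That argument is sound; the one bookkeeping point to patch is that the first index of each excursion stretch may itself have gap below $a/2$, which is absorbed for large $j$ because $\gamma_k\to0$ (a consequence of $\sum_k\gamma_k^2<\infty$). Your version is more generic, in that it needs neither Lemma~\ref{lem:nonincreasing} nor Lemma~\ref{lem:summable} and would survive even if the penalty sequence were not monotone on $\Cc$; the paper's monotonicity route, by contrast, buys a quantitative rate-type bound and monotone convergence of the merit values $J_k(x_k)$ for free.
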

\vspace*{\belowdisplayskip}
\begin{proof}
	For the proof, we use ideas from~\cite{boyd2003subgradient,Poljak1987}.
	By the update law~\eqref{eq:GradDesc} and convexity of $J_k$, we have
	\begin{align*}
		\|x_{k+1}-x^\star&\|^2 =  \|x_k - \gamma_k \nabla J_k(x_k)-x^\star\|^2\\
		&=  \|x_k -x^\star\|^2 - 2 \gamma_k \nabla J_k(x_k)^\top(x_k-x^\star) + \gamma_k^2\|\nabla J_k(x_k)\|^2\\
		&\leq  \|x_k -x^\star\|^2 - 2 \gamma_k (J_k(x_k) - J_k(x^\star)) + \gamma_k^2\|\nabla J_k(x_k)\|^2.
	\end{align*}
	By applying this inequality recursively, we obtain
	\begin{align*}
		0&\leq\|x_{k+1}-x^\star\|^2\\
		 &\leq  \|x_1 -x^\star\|^2 - 2 \sum_{i=1}^{k}\gamma_i (J_i(x_i) - J_i(x^\star)) + \sum_{i=1}^{k}\gamma_i^2\|\nabla J_i(x_i)\|^2,
	\end{align*}
	and hence,
	\begin{align*}
		&&2 \sum_{i=1}^{k}\gamma_i (J_i(x_i) - J_i(x^\star)) \leq & \|x_1 -x^\star\|^2 + \sum_{i=1}^{k}\gamma_i^2\|\nabla J_i(x_i)\|^2\\
		&\Leftrightarrow& 2 \sum_{i=1}^{k}\gamma_i (J_i(x_i) - f(x^\star)) \leq & \|x_1 -x^\star\|^2 + \sum_{i=1}^{k}\gamma_i^2\|\nabla J_i(x_i)\|^2\notag \\
		&& &+ 2m\sum_{i=1}^{k}\gamma_i \frac{1}{i} g(x^\star)^i.
	\end{align*}
	Finally, since by Lemma~\ref{lem:nonincreasing} (in Appendix~\ref{sec:auxLemmas}) $J_k(x_k)\leq J_i(x_i)$ for all $i\leq k$,
	\begin{multline}
		J_k(x_k) - f(x^\star) \leq \\
		\frac{\|x_1 -x^\star\|^2 + \sum_{i=1}^{k}\gamma_i^2\|\nabla J_i(x_i)\|^2 + 2m\sum_{i=1}^{k}\gamma_i \frac{1}{i} g(x^\star)^i}{2 \sum_{i=1}^{k}\gamma_i}.\label{eq:convProof}
	\end{multline}

	From Lemma~\ref{lem:summable} (in Appendix~\ref{sec:auxLemmas}), we have that $\gamma_i^2\|\nabla J_i(x_i)\|^2$ is summable.
	Further, we know that $\gamma_i\frac{1}{i}\leq\frac{1}{L_i}\frac{1}{i} =\frac{1}{ \bar{\sigma}(Q+m(4i-2)A) i} \leq\frac{1}{ m(4i-2)\bar{\sigma}(A) i} =\frac{1}{ (4i-2)i}\frac{1}{ m\bar{\sigma}(A) }$. Thus, $\gamma_i \frac{1}{i} g(x^\star)^i$ is summable.
	Since we required that the step size $\gamma_i$ is not summable, the denominator of the right-hand-side of~\eqref{eq:convProof} diverges, whereas the nominator converges to a finite value.
	From this, it follows that the right-hand-side of~\eqref{eq:convProof} converges to zero as $k\to\infty$.
	Hence, $J_k(x_k) - f(x^\star) \to0$ as $k\to\infty$.
	Since $J_k(x_k)\geq f(x_k)\geq f(x^\star)$, also $f(x_k) - f(x^\star) \to0$ as $k\to\infty$.

\end{proof}

\subsection{Encrypted implementation}
The sequential gradient descent steps of~\eqref{eq:GradDesc}
only involve polynomial operations. 
Computing the parameters needed by the algorithm is more difficult.
The required parameters are $m$ and the functions' parameters $Q$, $q$, $A$, $v$, and possibly $L_k$.
If the problem is known beforehand, the analysis for $m$ and $L_k$ can be done offline.
In a private implementation without problem knowledge, an upper bound of the parameters $m$ and $L_k$ can be used. Then, any problem that has lower true parameters can be solved by the algorithm; however, it will be conservative and possibly slower.

An additional challenge that is common to all encrypted algorithms is the lack of ability to evaluate a stopping criterion.
Typically, such stopping conditions involve a comparison of online obtained values to a threshold. Since this comparison is difficult to do for ciphertexts, the number of iterations $N$ of the optimization algorithm has to be set beforehand.

Another aspect to be considered in encrypted implementations is the multiplicative depth of the algorithm and the cryptosystem, respectively.
During every multiplication of encrypted numbers, the additive noise in the ciphertext that guarantees the security can be amplified.
Therefore, leveled homomorphic cryptosystems only allow for a limited number of multiplications. 
The number of iterations $N$ has to be chosen accordingly.
Fully homomorphic cryptosystems such as~\cite{Cheon2017}, however, support an infinite number of operations at the cost of higher computational complexity of the involved bootstrapping operation. 
For an overview of bootstrapping in different cryptosystems, see~\cite{Badawi2023}, and for an analysis of bootstrapping in a dynamic control context, see~\cite{Schlor24a}.

\section{Example: $\min(a,b)$}
An important special case of the considered problem is finding the minimum of two encrypted numbers $a$ and $b$, i.e.,
\begin{equation*}%
	 \begin{split}
	x^\star  =  \arg\min_{x} & \quad\hspace*{-0.5em} x\\
	\text{s.t.} & \quad\hspace*{-0.5em} x\in [\min(a,b),\max(a,b)].
\end{split}
\end{equation*}
This is a well-known problem for encrypted computations since comparing encrypted numbers is a difficult task~(cf.~\cite{Cheon2019,Cheon2020}).
We can recover the general problem formulation by setting
$Q = 0,~ q = 1,~ A = \frac{4}{(a-b)^2},~ v = \frac{a+b}{2}$ and choosing the functions 
$f(x) = x$ and $g(x) = \frac{4}{(a-b)^2} (x-\frac{a+b}{2})^2$.
The optimal slope ratio $m^\star = m_{\mathrm{inv}} = m_{\mathrm{min}}$ is given by 
\begin{align*}
	m^\star = - \min\left( \frac{\nabla f(a)}{\nabla g(a)} , \frac{\nabla f(b)}{\nabla g(b)} \right)  = \frac{|a-b|}{4}.
\end{align*}
Let us assume that we know an upper bound $m$ on $m^\star$ with $m=\alpha m^\star$ and $\alpha\geq1$.
Then, the auxiliary cost function is
\begin{align*}%
	J_k(x) &= x + \alpha m^\star\frac{1}{k}\left(\frac{4}{(a-b)^2} \left(x-\frac{a+b}{2}\right)^2\right)^k.
\end{align*}
The step size $\gamma_k$ can be chosen as
\begin{align*}
\gamma_k & = \frac{1}{L_k} = \frac{(a-b)^2}{4(4k-2)m} =  \frac{1}{\alpha}\frac{|a-b|}{4k-2}.
\end{align*}
Then, the gradient descent iterations for every $k\in\N$ are
\begin{align}
	x_{k+1}  %
	&=  x_{k} - \frac{(a-b)^2}{4(4k-2)m} - 2 A^{k-1} \left(x_{k}-\frac{a+b}{2}\right)^{2k-1} \label{eq:minABImpl:A}  \\
	&=  x_{k} - \frac{1}{\alpha}\frac{|a-b|}{4k-2} - \frac{1}{2k-1}\left(\frac{4}{(a-b)^2}\right)^{k-1} \left(x_{k} -\frac{a+b}{2}\right)^{2k-1}, \label{eq:minABImpl:analysis}
\end{align}
where~\eqref{eq:minABImpl:A} is in a form ready for implementation, and~\eqref{eq:minABImpl:analysis} will be used for analysis later.

For the encrypted implementation as in~\eqref{eq:minABImpl:A}, we require encrypted values of $a$, $b$ and $A$.
We note that the availability of $A$ is a strong assumption as it requires division of encrypted numbers. If these values are not available, encrypted division algorithms as in~\cite{Adamek2024} can be used once before the iteration starts.
Note that for the iteration with $k=1$, no $A$ is needed.
The constant $m$ can be chosen large enough prior to knowing the specific problem. The specific problem just should satisfy $m^\star\leq m$, then it can be solved by the algorithm.
Particularly, if we provide an algorithm with a value $m$, any problem with $|a-b|\leq 4m$ can be solved.
\begin{remark}
	This bound on compatible problems can be understood similar to an approximation interval if we approximated the minimum function by polynomials in the first place. However, here, we can provide guarantees on invariance of the solution and convergence to the true minimum.
\end{remark}
Note also that the minimizer $x_k^\star$ of the auxiliary problem would still converge to the true minimizer $x_k$ even if $0<m<m^\star$. Just the guarantees of the gradient descent do not hold any more but in many cases the iteration still converges.

\subsection{Accuracy of the auxiliary solution}
For this example, we can exactly calculate the minimizer $x_k^\star$ of the auxiliary problem depending on how conservative the choice of $m=\alpha m^\star$ is.
Due to strict convexity and radially unboundedness of $J_k$, $x_k^\star$ is a minimizer if and only if $\nabla J_k(x_k^\star)=0$.
For the analysis, we assume $a<b$ without loss of generality.
Together with the ansatz $x_k^\star = \frac{a+b}{2} -\varepsilon_k$,
 we obtain
\begin{align*}
	&& 0&= 1+ 2\alpha \frac{|a-b|}{4} \left(\frac{4}{(a-b)^{2}}\right)^k \left(-\varepsilon_k\right)^{2k-1} \\
	&\Leftrightarrow & \varepsilon_k&= \sqrt[2k-1]{\frac{1}{\alpha}} \frac{b-a}{2}.
\end{align*}
This means that the distance to the minimizer of the original problem can be expressed as
\begin{align*}
	x^\star -x_k^\star %
	 &= \frac{a-b}{2}\left(1-\sqrt[2k-1]{\frac{1}{\alpha}}\right),
\end{align*}
which for any $\alpha>0$ converges to zero as $k\to\infty$.
With this result, we can even determine the number of iterations $k$ for a desired precision $\delta<\frac{|a-b|}{2}$ as
\begin{align*}
	|x^\star -x_k^\star| &\leq \delta
	&\Leftrightarrow && k  &\geq \frac{1}{2} - \frac{\ln\left(\alpha\right)}{2 \ln(1-\frac{2}{|a-b|}\delta)}.
\end{align*}

\subsection{Accuracy of a single gradient step}\label{subsec:singleGradientStep}
For the case that the constraint parameter $A$ is not available, let us consider a gradient step for $k=1$, where only encrypted values of $a$ and $b$ are needed, since in
	\begin{align*}
		x_{2}  &= \frac{a+b}{2} - \frac{(a-b)^2}{8m}
	\end{align*}
all nonpolynomial operations are done with public numbers.
For $a<b$ this yields
\begin{align}
x_{2} %
		   &=\frac{a+b}{2} - \frac{1}{\alpha}\frac{|a-b|}{2}\label{eq:absFormulaAlpha}\\
  &= \frac{a\left(1+\frac{\alpha-1}{2}\right)+b\left(\frac{\alpha-1}{2}\right)}{\alpha},\notag
\end{align}
which is the exact minimum $a$ for $\alpha=1$. Further, $x_{2}\in[a, \frac{b+a}{2})$ for $\alpha\in[1,\infty)$.
Note that for $\alpha=1$,~\eqref{eq:absFormulaAlpha} recovers the well-known formula $\min(a,b) = \frac{a+b}{2} - \frac{|a-b|}{2}$~(cf.~\cite{Cheon2019,Cheon2020}).
However, if we replace $|a-b|$ na\"ively by the same upper bound $\frac{m}{4}=\alpha|a-b|\geq |a-b|$, we get
\begin{align*}
	x  &= \frac{a+b}{2} - \frac{m}{8}\\
	&= \frac{1+\alpha}{2}a + \frac{1-\alpha}{2}b,
\end{align*}
which is also the exact minimum $a$ for $\alpha=1$, but for $\alpha\in[1,\infty)$ takes values in $(-\infty,a]$. This might be less desirable than the invariance property of our proposed algorithm.

\section{Summary and Outlook}

In this paper, we presented a novel optimization algorithm to solve a special class of QCQP.
It is tailored to encrypted implementations as it explicitly only uses addition and multiplication, which are the natively supported operations of homomorphic cryptosystems.
With this, we demonstrated how this class of constrained optimization problems can be solved in an encrypted fashion without the need of a trusted third party, a multi-party protocol or na\"ive polynomial approximations of standard optimization algorithms.
For our proposed method, we showed several desirable properties, such as 
that the unique minimizers of the auxiliary unconstrained problems %
as well as
the gradient descent iterates always stay inside the feasible set and converge towards a minimizer of the original constrained problem.
Further, we showed how finding the minimum of two numbers can be formulated in our framework, and explicitly analyzed the relationship between accuracy, conservatism, and the number of iterations.

In future work, we plan to analyze the convergence speed for the general algorithm and further compare it with existing nonpolynomial barrier and penalty methods.
It would also be interesting to improve the handling of encrypted parameters in the constraints and to extend the idea to more general classes of optimization problems.

\appendix

\subsection{Proof of Lemma~\ref{lem:L-smooth}:}\label{proof:lem:L-smooth}
	From the definition of $J_k(x)$, we get
	\begin{align*}%
		&\nabla^2 J_k(x) = \nabla^2 f(x) + \nabla^2 p_k(x) \\
		&= Q + m( (k-1)g(x)^{k-2} \nabla g(x) \nabla g(x)^\top  + g(x)^{k-1} \nabla^2 g(x))\\
		&\preceq Q + m( (4k-4) A(x-v)(x-v)^\top A^\top  + 2A),
	\end{align*}
	where in the last inequality we used that the largest Hessian in the Loewner order is found at the boundary $\partial\Cc$, where $g(x)=1$.
	Now, we parameterize $x$ on the boundary $\partial\Cc$ as $(x-v)=\sqrt{A^{-1}}y$ with $\|y\|=1$ and $A=\sqrt{A^{-1}}\sqrt{A^{-1}}$. 
	Further, we observe that the singular values of $yy^\top$ fulfill $\sigma(yy^\top) = \{1,0,\dots,0\}$. 
	This yields
	\begin{align*}%
		\nabla^2 J_k(x) &\preceq Q + m( (4k-4) A\sqrt{A^{-1}}yy^\top\sqrt{A^{-1}}^\top A^\top  + 2A)\\
		&\preceq Q + m( (4k-4) A\sqrt{A^{-1}}\bar{\sigma}(yy^\top)I\sqrt{A^{-1}}^\top A^\top  + 2A)\\
		&= Q + m (4k-2) A \\
		&\preceq \bar{\sigma}(Q + m (4k-2) A)I.
	\end{align*}
\hfill $\blacksquare$

\subsection{Proof of Proposition~\ref{thm:Inv}:}\label{proof:thm:Inv}
	The proof works in three steps. First, we show that under the gradient step~\eqref{eq:GradDesc}, the image of a levelset of $g$, which is an ellipsoidal surface, is again an ellipsoidal surface. Second, we show that images that correspond to a lower level of $g$, are contained in ellipsoids that correspond to a level of $g(x)=1$.
	In the third step, we explicitly show that if $x_k\in\partial\Cc$, then $x_{k+1}\in\Cc$, which, according to the first part of the proof, bound all other levelsets inside the ellipsoid $C$, which is the $1$-sublevelset of $g$.
	
For the current iterate $x_k\in\Cc$ with $g(x_k)=c\in[0,1]$, we define the ellipsoidal levelset
$\partial\Cc_c= \{x \mid g(x)= c\}$ 
and the ellipsoidal levelset $\partial\Cc_c'= \{x \mid g_c'(x)= c\}$ with the same level $c$ for a quadratic function $g_c'(x) = (x-v')^\top A_c' (x-v')$ with parameters $v'$ and $A_c'$.
The center $v'$ of the ellipsoid $\partial\Cc_c'$ is given as $v'=v-\gamma_k (q +Qv)$.
The matrix $A_c'$ can be computed as $A_c'=T_c^{-1} A T_c^{-1}$ with the symmetric matrix $T_c=I-\gamma_k(Q+2mc^{k-1}A)$.
Then, 
\begin{align*}
	g_c'(x_{k+1}) 
	&= (x_k-v)^\top TT^{-1} AT^{-1} T (x_k-v)
	= g(x_k) = c.
\end{align*}
Thus, if $x_k \in\partial\Cc_c$, then $x_{k+1} \in\partial\Cc'_c$.

Now, we show that $\partial\Cc_c' = \{x \mid g'_c(x)= c\}$ is contained $\Cc_1' = \{x \mid g_1'(x)\leq c\}$.
Since $\Cc_c'$ and $\Cc_1'$ have the same center $v'$, the condition $\partial\Cc_c' \subseteq \Cc_1'$ is satisfied if and only if
\begin{align*}
&& \frac{1}{c}A_c'&\succeq A_1 &
\Leftrightarrow&& A &\succeq \sqrt{c}T_cT_1^{-1} A T_1^{-1}T_c\sqrt{c}.
\end{align*}
This holds if and only if $\bar{\sigma}(T_1^{-1}T_c\sqrt{c})\leq1$.
Since $T_1^{-1}T_c\sqrt{c}\succeq0$ and $T_1^{-1}\succ0$, this is equivalent to
\begin{align*}
&& T_1^{-1}T_c\sqrt{c} -I&\preceq 0\\
\Leftrightarrow&& (\sqrt{c}-1)(I-\gamma_k Q) + \gamma_km2A(\sqrt{c}c^{k-1}-1)&\preceq 0\\
\Leftarrow&& (\sqrt{c}-1)(I-\frac{1}{L_k} (Q-m2A) &\preceq 0\\
\Leftarrow&& (\sqrt{c}-1)\big(I-\frac{1}{\bar{\sigma}(Q+m2(2k-1)A)} (Q-m2A)\big) &\preceq 0,
\end{align*}
which is satisfied for all $k\geq 1$.

Finally, we show that if $x_k \in\partial\Cc$, then $x_{k+1} \in\Cc$, which implies that $\partial\Cc_1'\subseteq\Cc$.
Consider $x_k\in\partial\Cc$.
If $\nabla J_k(x_{k})=0$, then $x_{k+1}=x_k\in\Cc$. 
Now consider the case $\nabla J_k(x_{k})\neq0$.
The cosine of the angle between $\nabla J_k(x_k)$ and $\nabla g(x_k)$ is given as 
\begin{align*}
	\cos(\phi(x_k)) = \frac{\left\langle \nabla J_k(x_k),\nabla g(x_k) \right\rangle}{\norm{\nabla J_k(x_k)} \norm{\nabla g(x_k)}}.
\end{align*}
The radius of maximum curvature of the ellipsoid $\Cc$ is given by $r=\frac{\sqrt{\ubar{\sigma}(A)}}{\bar{\sigma}(A)}$.
For every point $x_k\in\partial\Cc$, a ball $B_r$ with radius $r$ can be placed such that $x_k\in\partial B_r$ and $B_r\subseteq \Cc$.
Consider a line from $x_k$ in the direction of $\nabla J_k(x_k)$, i.e., with angle $\phi$ from the normal $\nabla g(x_k)$ of the ellipsoid and the ball on $x_k$. Then, the length of the line inside $B_r$ is given as $2r\cos(\phi(x_k))$.
Thus, if the gradient step $\|x_{k+1}-x_k\| = \|-\gamma_k \nabla J_k(x_k)\|$ is not longer than the line length inside the ball, invariance is guaranteed.
The condition is equivalent to
\begin{align*}
& &	\|-\gamma_k \nabla J_k(x_k)\| &\leq 2r\cos(\phi(x_k))\\
&\Leftarrow & \|\nabla J_k(x_k)\|	&\leq 2rL_1\cos(\phi(x_k)),
\end{align*}
where we used that $\gamma_k\leq \frac{1}{L_k}\leq \frac{1}{L_1} \,\forall k\in\N$.
Hence, by Requirement~\ref{req:inv}, the gradient descent step~\eqref{eq:GradDesc} leads to $x_{k+1}\in\Cc$ if $x_k\in\partial\Cc$.
With this, we have shown $x_k\in\partial\Cc_c\subseteq\Cc \implies x_{k+1}\in\partial\Cc_c'\subseteq\Cc_1'\subseteq\Cc$.

\hfill $\blacksquare$

\subsection{Auxiliary lemmas}\label{sec:auxLemmas}
In this section, we provide some intermediate results that we need for the proof of Theorem~\ref{thm:conv1}.
\begin{lemma}\label{lem:nonincreasing}
	The sequence $\{J_k(x_k)\}_k$ is nonincreasing, particularly, $J_{k+1}(x_{k+1}) \leq J_{k}(x_{k+1}) \leq J_{k}(x_{k})$.
\end{lemma}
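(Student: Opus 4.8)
The plan is to prove the two inequalities in the stated chain separately, since they rest on different mechanisms. The right inequality $J_k(x_{k+1}) \leq J_k(x_k)$ is a consequence of the standard descent lemma for the $L_k$-smooth function $J_k$, whereas the left inequality $J_{k+1}(x_{k+1}) \leq J_k(x_{k+1})$ is a purely algebraic comparison of the two penalty terms at the common point $x_{k+1}$, exploiting that $g \leq 1$ throughout the feasible set.

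For the right inequality, I would first invoke Proposition~\ref{thm:Inv} to guarantee that both $x_k$ and $x_{k+1}$ lie in $\Cc$, and use that $\Cc$ is convex (an ellipsoid) so the whole segment joining them stays in $\Cc$. On this segment the Hessian bound $\nabla^2 J_k \preceq L_k I$ of Lemma~\ref{lem:L-smooth} holds, which yields the quadratic upper bound
\[
	J_k(x_{k+1}) \leq J_k(x_k) + \nabla J_k(x_k)^\top(x_{k+1}-x_k) + \tfrac{L_k}{2}\norm{x_{k+1}-x_k}^2.
\]
Substituting the update $x_{k+1}-x_k = -\gamma_k \nabla J_k(x_k)$ gives $J_k(x_{k+1}) \leq J_k(x_k) - \gamma_k\bigl(1-\tfrac{L_k\gamma_k}{2}\bigr)\norm{\nabla J_k(x_k)}^2$, and the step-size rule $\gamma_k \leq 1/L_k$ makes the coefficient $\gamma_k\bigl(1-\tfrac{L_k\gamma_k}{2}\bigr)$ nonnegative, so $J_k(x_{k+1}) \leq J_k(x_k)$.

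For the left inequality I would compute the difference directly. Since $f$ cancels, $J_{k+1}(x_{k+1}) - J_k(x_{k+1}) = p_{k+1}(x_{k+1}) - p_k(x_{k+1}) = m\,g(x_{k+1})^k\bigl(\tfrac{1}{k+1}g(x_{k+1}) - \tfrac{1}{k}\bigr)$. By Proposition~\ref{thm:Inv}, $x_{k+1}\in\Cc$, hence $0 \leq g(x_{k+1}) \leq 1$, so $g(x_{k+1})^k \geq 0$ while the bracket satisfies $\tfrac{1}{k+1}g(x_{k+1}) - \tfrac{1}{k} \leq \tfrac{1}{k+1} - \tfrac{1}{k} < 0$. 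With $m \geq 0$, the whole difference is nonpositive, giving $J_{k+1}(x_{k+1}) \leq J_k(x_{k+1})$. Chaining the two inequalities then yields the claim, and induction over $k$ establishes that $\{J_k(x_k)\}_k$ is nonincreasing.

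The main obstacle is more a matter of bookkeeping than genuine difficulty: the Hessian bound of Lemma~\ref{lem:L-smooth} holds only on $\Cc$, so I must rely on the positive invariance of $\Cc$ (Proposition~\ref{thm:Inv}) together with its convexity to certify that $J_k$ is $L_k$-smooth along the \emph{entire} segment from $x_k$ to $x_{k+1}$, not merely at its endpoints, before applying the descent lemma. Once feasibility of the iterates is secured, both inequalities reduce to elementary estimates.
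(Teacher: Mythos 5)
Your proof is correct and follows essentially the same route as the paper: the right inequality is the standard gradient-descent descent estimate under $L_k$-smoothness with $\gamma_k \leq 1/L_k$ (which the paper simply cites as standard, and spells out in the proof of Lemma~\ref{lem:summable}), and the left inequality is the same algebraic comparison $J_{k+1}(x_{k+1}) - J_k(x_{k+1}) = m\,g(x_{k+1})^k\bigl(\tfrac{1}{k+1}g(x_{k+1}) - \tfrac{1}{k}\bigr) \leq 0$ using $0 \leq g(x_{k+1}) \leq 1$ and $m \geq 0$. You are merely more explicit than the paper in invoking Proposition~\ref{thm:Inv} and the convexity of $\Cc$ to justify applying the Hessian bound of Lemma~\ref{lem:L-smooth} along the whole segment, a detail the paper leaves implicit (and which is non-circular, since the invariance proof does not use this lemma).
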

\begin{proof} %
The descent relation $J_{k}(x_{k+1}) \leq J_{k}(x_{k})$, is 
a standard property of gradient descent with the chosen sequence of step sizes.
From the construction of $J_k$, it follows that
\begin{align*}
	J_k(x_{k+1}) &= J_{k+1}(x_{k+1}) - m\left(\frac{1}{k+1}g(x_{k+1})-\frac{1}{k}\right)g(x_{k+1})^k\\
	&\geq J_{k+1}(x_{k+1}) - m\left(\frac{1}{k+1}-\frac{1}{k}\right)g(x_{k+1})^k\\
	&= J_{k+1}(x_{k+1}) + m\frac{1}{(k+1)k}g(x_{k+1})^k\\
	&\geq J_{k+1}(x_{k+1}),
\end{align*}
where we used that $0\leq g(x_{k+1})\leq1$.
\end{proof}

\begin{lemma}\label{lem:summable}
	The sequence $\{\gamma_k^2\|\nabla J_k(x_k)\|^2\}_k$ is summable, i.e., $\sum_{k=1}^{\infty}\gamma_k^2\|\nabla J_k(x_k)\|^2<\infty$. 
\end{lemma}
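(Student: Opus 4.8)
The plan is to derive a uniform (in $k$) bound on $\|\nabla J_k(x_k)\|$ and then exploit the summability of $\{\gamma_k^2\}_k$ to conclude. Since the factor $\gamma_k^2$ is summable by the standing step-size assumption, it suffices to show that $\|\nabla J_k(x_k)\|$ is bounded by a single constant independent of $k$; the whole difficulty is thus reduced to controlling the gradient along the trajectory.

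First, I would invoke Proposition~\ref{thm:Inv}: since $x_1\in\Cc$ and $\Cc$ is positively invariant under the iteration~\eqref{eq:GradDesc}, every iterate satisfies $x_k\in\Cc$. This confines the entire sequence of iterates to the compact set $\Cc$, which is what makes a uniform bound possible in the first place.

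Next, I would bound the gradient on $\Cc$. Writing $\nabla J_k(x) = \nabla f(x) + m\,g(x)^{k-1}\nabla g(x)$ and using that $0\le g(x)\le 1$ for $x\in\Cc$, so that $g(x)^{k-1}\le 1$ for every $k\in\N$, yields $\|\nabla J_k(x)\| \le \|\nabla f(x)\| + m\|\nabla g(x)\|$ for all $x\in\Cc$. The right-hand side is continuous in $x$ and therefore attains a finite maximum $C \coloneq \max_{x\in\Cc}\big(\|\nabla f(x)\| + m\|\nabla g(x)\|\big)$ on the compact set $\Cc$, with $C$ independent of $k$. Combined with the previous step, this gives $\|\nabla J_k(x_k)\| \le C$ for all $k$.

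Finally, I would conclude by estimating $\gamma_k^2\|\nabla J_k(x_k)\|^2 \le C^2\gamma_k^2$ and summing, so that $\sum_{k=1}^{\infty} \gamma_k^2\|\nabla J_k(x_k)\|^2 \le C^2 \sum_{k=1}^{\infty} \gamma_k^2 < \infty$ by the assumed summability of $\{\gamma_k^2\}_k$. The only delicate point is the uniformity in $k$ of the gradient bound: a priori $\nabla p_k$ carries the $k$-dependent factor $g(x)^{k-1}$, which grows unboundedly off $\Cc$, but the invariance property keeps the iterates inside $\Cc$ where this factor is at most $1$, neutralizing the $k$-dependence. This observation is the crux of the argument, and it is exactly what ties Lemma~\ref{lem:summable} back to the earlier invariance result.
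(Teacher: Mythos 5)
Your argument is correct, and it takes a genuinely different route from the paper's. It is also non-circular: Proposition~\ref{thm:Inv} is proved independently of Lemma~\ref{lem:summable}, and the lemma is only invoked inside Theorem~\ref{thm:conv1}, whose hypotheses (Requirement~\ref{req:inv} and the feasible initialization $x_1\in\Cc$) are exactly what you need to keep all iterates in $\Cc$; combined with $0\le g(x)^{k-1}\le 1$ on $\Cc$ and compactness of $\Cc$, your uniform bound $C=\max_{x\in\Cc}(\|\nabla f(x)\|+m\|\nabla g(x)\|)$ is sound, and the conclusion follows from the standing assumption that $\{\gamma_k^2\}_k$ is summable. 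The paper instead runs an energy-decrease argument: it applies the descent inequality for $L_k$-smooth functions to get $J_k(x_{k+1})\le J_k(x_k)-\delta_k\|x_{k+1}-x_k\|^2$ with $\delta_k\ge L_k/2$, chains it with $J_{k+1}(x_{k+1})\le J_k(x_{k+1})$ from Lemma~\ref{lem:nonincreasing}, telescopes to bound $\sum_k\delta_k\|x_{k+1}-x_k\|^2$ by $J_1(x_1)-f(x_N)<\infty$, and converts via $\|x_{k+1}-x_k\|=\gamma_k\|\nabla J_k(x_k)\|$, finally using $L_k\to\infty$ to drop the factor $L_k$. Comparing the two: your route is shorter and more elementary, but it genuinely consumes the assumption $\sum_k\gamma_k^2<\infty$; the paper's route never uses that assumption at all --- it derives summability of the strictly stronger sequence $\{L_k\gamma_k^2\|\nabla J_k(x_k)\|^2\}_k$ from the monotone decrease of $J_k(x_k)$, needing only $\gamma_k\le 1/L_k$ and $L_k\to\infty$, and it would survive in settings (e.g., unbounded domains) where no uniform gradient bound is available. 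Note also that the paper's proof relies on invariance implicitly rather than explicitly: the descent inequality uses $L_k$-smoothness, which Lemma~\ref{lem:L-smooth} establishes only inside $\Cc$, Lemma~\ref{lem:nonincreasing} uses $0\le g(x_{k+1})\le 1$, and the uniform lower bound on $f(x_N)$ needs $x_N\in\Cc$ --- so your explicit appeal to Proposition~\ref{thm:Inv} makes visible a dependence the paper keeps tacit.
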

\begin{proof}\label{proof:lem:summable} %
From $L_k$-smoothness and the gradient step $x_{k+1} = x_k - \gamma_k \nabla J_k(x_k) \Leftrightarrow \nabla J_k(x_k) = \frac{x_k - x_{k+1}}{\gamma_k}$ it follows that
\begin{align*}
	J_k(x_{k+1}) &\leq J_k(x_k) + \nabla J_k(x_k)^\top (x_{k+1} - x_k) + \frac{L_k}{2}\|x_{k+1}-x_k\|^2\\
	&= J_k(x_k) - \frac{1}{\gamma_k}\|x_{k+1}-x_k\|^2 + \frac{L_k}{2}\|x_{k+1}-x_k\|^2\\
	&= J_k(x_k) - \delta_k\|x_{k+1}-x_k\|^2
\end{align*}
with $\delta_k=\left(\frac{1}{\gamma_k} - \frac{L_k}{2} \right)\geq \frac{L_k}{2}$. 
With $J_k(x_{k+1})\geq J_{k+1}(x_{k+1})$ we get
\begin{align*}
	\delta_k\|x_{k+1}-x_k\|^2 &\leq J_k(x_k) - J_{k+1}(x_{k+1}).
\end{align*}
If we sum from $k=1$ to $N-1$, we obtain
\begin{align*}
	\sum_{k=1}^{N-1}\delta_k\|x_{k+1}-x_k\|^2 &\leq J_1(x_1) - J_N(x_N)\\
	&\leq J_1(x_1) - f(x_N) < \infty.
\end{align*}
Thus, $\{\delta_k\|x_{k+1}-x_k\|^2\}_k$ is a summable sequence.
Since $\delta_k\geq \frac{L_k}{2}$, and $\|x_{k+1}-x_k\| = \gamma_k\|\nabla J_k(x_k)\|$, also $\{\frac{1}{2}L_k\gamma_k^2\|\nabla J_k(x_k)\|^2\}_k$ is summable, and also $\{L_k\gamma_k^2\|\nabla J_k(x_k)\|^2\}_k$ is summable.
Since $L_k\to\infty$ as $k\to\infty$, also $\{\gamma_k^2\|\nabla J_k(x_k)\|^2\}_k$ is summable.
\end{proof}

\bibliographystyle{IEEEtran} %
\bibliography{CDC2025}

\end{document}